\newtheorem{proposition}{Proposition}[section]
\newtheorem{theorem}[proposition]{Theorem}
\newcommand{\cst}{\ifmmode\mathrm{C}^*\else{$\mathrm{C}^*$}\fi}
\newcommand{\tens}{\otimes}
\newcommand{\id}{\mathrm{id}}
\newcommand{\comp}{\!\circ\!}
\newcommand{\I}{\mathbb{1}}
\newcommand{\ph}{\varphi}
\newcommand{\CC}{\mathbb{C}}
\newcommand{\ZZ}{\mathbb{Z}}
\newcommand{\GG}{\mathbb{G}}
\DeclareMathOperator{\C}{C}
\DeclareMathOperator{\qs}{\mathscr{QS}}
\DeclareMathOperator{\Mor}{Mor}
\DeclareMathOperator{\QMap}{\mathsf{Q}-Map}
\numberwithin{equation}{section}
\begin{document}

\title{On quantum maps into quantum semigroups}

\date{\today}

\author{Piotr M.~So{\l}tan}
\address{Department of Mathematical Methods in Physics, Faculty of Physics, University of Warsaw\newline
\indent{and}\newline
\indent{}Institute of Mathematics, Polish Academy of Sciences}
\email{piotr.soltan@fuw.edu.pl}
\urladdr{http://www.fuw.edu.pl/~psoltan/en/}

\thanks{Partially supported by Polish government grant no.~N201 1770 33, European Union grant PIRSES-GA-2008-230836 and Polish government matching grant no.~1261/7.PR UE/2009/7.}

\begin{abstract}
We analyze the recent examples of quantum semigroups defined by M.M.~Sadr who also brought up several open problems concerning these objects. These are defined as quantum families of maps from finite sets to a fixed compact quantum semigroup. We show that these are special cases of free products of quantum semigroups. This way we can answer all the questions stated by M.M.~Sadr. Along the way we discuss the question whether restricting the comultiplication of a compact quantum group to a unital $\mathrm{C}^*$-subalgebra defines such a structure on the subalgebra. In the last section we show that the quantum family of all maps from a non-classical finite quantum space to a quantum group (even a finite classical group) might not admit any quantum group structure.
\end{abstract}

\subjclass[2010]{20G42, 46L89}

\keywords{quantum group, quantum semigroup, quantum family of maps}
\maketitle

\section{Introduction}\label{intro}

Let $X$ and $Y$ be locally compact Hausdorff topological spaces. Then the space $\C(X,Y)$ carries a natural topology --- the compact-open topology. As a topological space $\C(X,Y)$ is characterized by the following universal property: let us denote by $\psi$ the continuous map $\C(X,Y)\times{X}\to{Y}$ given by
\[
\psi(f,x)=f(x);
\]
then for any locally compact Hausdorff topological space $M$ and a continuous map $\ph\colon{M}\times{X}\to{Y}$ there is a unique continuous $\lambda\colon{M}\to\C(X,Y)$ such that
\[
\psi\bigl(\lambda(m),x\bigr)=\ph(x)
\]
for all $x\in{X}$ i $m\in{M}$. Note that when $X$ is finite and $Y$ is compact then $\C(X,Y)$ is compact.

The above characterization might be rather artificial for topological spaces. However, when we pass to the category dual to the category of locally compact topological spaces, i.e.~the category of commutative \cst-algebras such a characterization turns out to be very useful. Moreover we can use this description of $\C(X,Y)$ to define an analog of this space when $X$ and $Y$ are no longer \emph{classical} spaces, but \emph{quantum} spaces.

Quantum spaces (called \emph{pseudospaces} in \cite{pseu}) are virtual objects which correspond to noncommutative \cst-algebras in a way analogous to how locally compact spaces correspond to commutative \cst-algebras. For any \cst-algebra $A$ we write $\qs(A)$ for the corresponding quantum space. When $A$ happens to be commutative then we can identify $\qs(A)$ with the unique locally compact space $X$ such that $A\cong\C_0(X)$ --- the space of continuous functions vanishing at infinity on $X$.

In \cite{pseu,qs} quantum analogs of spaces of the form $\C(X,Y)$ for $X$ finite and $Y$ a compact subset of $\CC^n$ were defined and shown to exist. This means that for any finite-dimensional \cst-algebra $B$ and any unital finitely generated \cst-algebra $A$ there exists a unique \cst-algebra $C$ equipped with a unital $*$-homomorphism
\begin{equation}\label{Phi}
\Phi\colon{A}\longrightarrow{B}\tens{C}
\end{equation}
such that for any \cst-algebra $D$ and any $\Psi\in\Mor(A,B\tens{D})$ there exists a unique $\Lambda\in\Mor(C,D)$ such that
\begin{equation}\label{first}
\xymatrix{
A\ar[rr]^-{\Phi}\ar@{=}[d]&&B\tens{C}\ar[d]^{\id\tens\Lambda}\\
A\ar[rr]^-{\Psi}&&B\tens{D}}
\end{equation}
Here $\Mor(\cdot,\cdot)$ is the space of \emph{morphisms of \cst-algebras} as defined e.g.~in \cite[Section 1]{pseu}. The pair $(C,\Phi)$ is already determined uniquely by requiring that for any \emph{unital} \cst-algebra $D$ and a unital $*$-homomorphism $\Psi\colon{A}\to{B}\tens{D}$ there exists a unique unital $*$-homomorphism $\Lambda\colon{C}\to{D}$ making \eqref{first} commute.

Thus given a finite-dimensional \cst-algebra $B$ and a unital finitely generated \cst-algebra $A$ we have a new \cst-algebra $C$ together with a unital $*$-homomorphism \eqref{Phi}. In view of the characterization of the space of continuous maps discussed above it makes sense to call $\qs(C)$ the \emph{quantum space of all maps} from $\qs(B)$ to $\qs(A)$ denoted by $\QMap\bigl(\qs(B),\qs(A)\bigr)$.

In particular one can consider the case $A=B$. Then the quantum space $\QMap\bigr(\qs(B),\qs(B)\bigr)$ corresponding to the \cst-algebra $C$ (denoted simply by $\QMap\bigr(\qs(B)\bigr)$) carries a natural structure of a \emph{compact quantum semigroup}, i.e.~ there exists a unital $*$-homomorphism $\Delta_C\colon{C}\to{C}\tens{C}$ which is \emph{coassociative}:
\[
(\Delta_C\tens\id)\comp\Delta_C=(\id\tens\Delta_C)\comp\Delta_C.
\]
(\cite[Section 4]{qs}). Existence of this structure generalizes the fact that given a finite set $X$ the set of all maps $X\to{X}$ is a compact (in fact finite) semigroup. Let us emphasize that in the category of quantum spaces this construction leads to highly nontrivial quantum spaces (cf.~Example \ref{example}).

In \cite{mms} M.M.~Sadr analyzed the analog of another natural phenomenon, namely that the space of all maps from a finite set to a compact semigroup is in itself a compact semigroup. He showed that a non-commutative version of this fact is true. Namely, if $\qs(A)$ carries a structure of a quantum semigroup (so that we have a coassociative $\Delta_A\colon{A}\to{A}\tens{A}$) and $B$ is commutative then there exists a unique $\Gamma\colon{C}\to{C}\tens{C}$ such that the diagram
\begin{equation}\label{Gam}
\xymatrix
{
A\ar[rr]^-\Phi\ar[d]_{\Delta_A}&&B\tens{C}\ar[d]^{\id\tens\Gamma}\\
A\tens{A}\ar[d]_{\Phi\tens\Phi}&&B\tens{C}\tens{C}\\
B\tens{C}\tens{B}\tens{C}\ar[rr]_{\id\tens\chi\tens\id}&&B\tens{B}\tens{C}\tens{B}\ar[u]_{\mu\tens\id\tens\id}
}
\end{equation}
(where $\chi$ is the flip $C\tens{B}\to{B}\tens{C}$ and $\mu\colon{B}\tens{B}\to{B}$ is the multiplication map) is commutative. Moreover $\Gamma$ gives $\qs(C)$ the structure of a compact quantum semigroup. Even in the simples examples this construction produces interesting quantum semigroups.

Our aim in this paper is to show that this result is a consequence of a mild generalization of S.~Wang's work on free products of compact quantum groups (\cite{free}). Using this point of view we will answer all the questions left open by M.M.~Sadr in \cite[Section 3]{mms} about quantum semigroups obtained in this way.

We will be using the standard tools and language of the theory of compact quantum groups (\cite{cqg,bmt}) and quantum families of maps (\cite{pseu,qs}). We will also use free products of \cst-algebras (\cite{avitz} see also \cite{free}) which will always be amalgamated over the multiplies of the unit, i.e.~the units of the factor algebras will be identified. The symbol for free products will be ``$\star$'' in order to avoid confusion convolution products which are common in quantum group theory.

\section{Quantum families of maps from finite sets}

Let us consider the situation when $A$ is a unital finitely generated \cst-algebra and $B$ a commutative finite-dimensional \cst-algebra. In this case the quantum space of all maps $\qs(B)\to\qs(A)$ can be described explicitly in the following way:

\begin{theorem}\label{easy1}
Let $A$ be a unital finitely generated \cst-algebra and $B=\CC^n$ a commutative finite-dimensional \cst-algebra. Let $C$ be the \cst-algebra corresponding to the quantum space of all maps $\qs(B)\to\qs(A)$ and let
\[
\Phi\colon{A}\longrightarrow{B}\tens{C}
\]
be the quantum family of all maps $\qs(B)\to\qs(A)$. Then $C$ is isomorphic to the free product $A^{\star{n}}$ and with this isomorphism
\begin{equation}\label{Phi2}
\Phi(a)=\sum_{i=1}^ne_i\tens\iota_i(a),
\end{equation}
where $\{e_1,\dots,e_n\}$ is the standard basis of $B$ and $\iota_1,\dotsc,\iota_n$ are the natural inclusions $A\hookrightarrow{A^{\star{n}}}$.
\end{theorem}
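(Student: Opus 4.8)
The plan is to verify directly that the pair $(A^{\star{n}},\Phi)$, with $\Phi$ given by \eqref{Phi2}, enjoys the universal property recalled in the Introduction, and then to conclude by the uniqueness of that pair. First I would settle that \eqref{Phi2} does define a unital $*$-homomorphism $A\to B\tens{A^{\star{n}}}$: writing $B=\CC^n=\bigoplus_{i=1}^n\CC e_i$, there is a canonical identification $B\tens{A^{\star{n}}}\cong\bigoplus_{i=1}^n A^{\star{n}}$, under which a unital $*$-homomorphism with values in $B\tens{A^{\star{n}}}$ amounts to an $n$-tuple of unital $*$-homomorphisms $A\to A^{\star{n}}$; the tuple $(\iota_1,\dotsc,\iota_n)$ is of this kind and corresponds precisely to \eqref{Phi2}.

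The same identification drives the rest of the argument. Given a unital \cst-algebra $D$ and a unital $*$-homomorphism $\Psi\colon{A}\to{B}\tens{D}$, the isomorphism $B\tens{D}\cong\bigoplus_{i=1}^n D$ turns $\Psi$ into an $n$-tuple $(\psi_1,\dotsc,\psi_n)$ of unital $*$-homomorphisms $\psi_i\colon{A}\to{D}$, so that $\Psi(a)=\sum_{i=1}^ne_i\tens\psi_i(a)$. The defining universal property of the free product (with identified units) then furnishes a unique unital $*$-homomorphism $\Lambda\colon{A^{\star{n}}}\to{D}$ with $\Lambda\comp\iota_i=\psi_i$ for each $i$, and a one-line computation,
\[
(\id\tens\Lambda)\bigl(\Phi(a)\bigr)=\sum_{i=1}^ne_i\tens\Lambda\bigl(\iota_i(a)\bigr)=\sum_{i=1}^ne_i\tens\psi_i(a)=\Psi(a),
\]
shows that $\Lambda$ makes \eqref{first} commute (with $C$ replaced by $A^{\star{n}}$). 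For uniqueness of $\Lambda$ I would take $\Lambda'\colon{A^{\star{n}}}\to{D}$ unital with $(\id\tens\Lambda')\comp\Phi=\Psi$ and compare $i$-th components in $\bigoplus_{i=1}^n D$ to get $\Lambda'\comp\iota_i=\psi_i=\Lambda\comp\iota_i$ for all $i$; since $\iota_1(A),\dotsc,\iota_n(A)$ generate $A^{\star{n}}$ as a \cst-algebra, $\Lambda'=\Lambda$. Thus $(A^{\star{n}},\Phi)$ satisfies the universal property in its equivalent formulation for unital targets, and by the uniqueness of $(C,\Phi)$ there is a $*$-isomorphism $C\to{A^{\star{n}}}$ intertwining the original $\Phi$ with the one in \eqref{Phi2}.

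I do not expect a genuine obstacle here: the only point requiring care is the bookkeeping with the identifications $B\tens{D}\cong\bigoplus_{i=1}^n D$ together with the observation that it is enough to test the universal property on unital $D$, after which the statement collapses onto the universal property of the free \cst-product. It is also worth noting in passing that $A^{\star{n}}$ is again unital and finitely generated, so the hypotheses guaranteeing the existence of $C$ are consistent with the identification being claimed.
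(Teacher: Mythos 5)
Your proposal is correct and follows essentially the same route as the paper: both verify directly that $(A^{\star{n}},\Phi)$ with $\Phi$ as in \eqref{Phi2} has the universal property, by decomposing a unital $*$-homomorphism $\Psi\colon A\to B\tens D$ into an $n$-tuple of unital $*$-homomorphisms $A\to D$ and invoking the universal property of the free product with amalgamation over the units. Your write-up merely spells out the bookkeeping (the identification $B\tens D\cong\bigoplus_{i=1}^n D$ and the uniqueness check via generation by the $\iota_i(A)$) that the paper leaves implicit.
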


\begin{proof}
The conclusion of the theorem may be reached by analyzing the construction of $C$ given in \cite[Theorem 3.3]{qs}. However it is much easier to simply check that $(A^{\star{n}},\Phi)$, with $\Phi$ given by \eqref{Phi2}, has the universal property of the quantum family of all maps $\qs(B)\to\qs(A)$.

This is quite easy since any unital $*$-homomorphism $\Psi\colon{A}\to{B}\tens{D}$ (for some unital \cst-algebra $D$) is of the form
\[
\Psi(a)=\sum_{i=1}^ne_i\tens\Psi_i(a),
\]
where $\Psi_1,\dotsc\Psi_n$ are unital $*$-homomorphisms $A\to{D}$. The universal property of $(A^{\star{n}},\iota_1,\dotsc,\iota_n)$ (cf.~\cite[Section 0]{avitz}) is precisely that for any collection $\Psi_1,\dots,\Psi_n$ of maps $A\to{D}$ there exists a unique $\Lambda\colon{A^{\star{n}}}\to{D}$ such that $\Lambda\comp\iota_i=\Psi_i$.
\end{proof}

\section{Free products of compact quantum semigroups}\label{fp}

In \cite{free} S.~Wang introduced so called \emph{free products} of compact quantum groups. This name is somewhat misleading since most concepts from group theory generalized to compact quantum groups do not change their meaning when applied to classical groups considered as quantum groups (described by commutative \cst-algebras). In the case of free products this is not the case. The free product of compact quantum groups which are classical is no longer a classical compact group. On the other hand, the name is well justified by considering objects dual to compact quantum groups (i.e.~discrete quantum groups, cf.~\cite[Section 3]{free}).\footnote{In the author's view the terminology ``co-free product'' could be considered as an alternative to ``free product'' in this situation.}

There is no reason why we should not apply Wang's free product construction to quantum semigroups instead of quantum groups. The proof that the resulting object is again a compact quantum semigroup is very simple (however for the main step, all be it quite trivial, S.~Wang uses quantum \emph{group} structures which are not present in our case). For the convenience of the reader we will give a brief indication of the proof of the corresponding theorem for quantum semigroups.

Let $A$ and $C$ be unital \cst-algebras. Assume that the quantum spaces $\qs(A)$ and $\qs(C)$ carry a quantum semigroup structure, i.e.~both $A$ and $B$ are equipped with coassociative morphisms $\Delta_A\colon{A}\to{A}\tens{A}$ and $\Delta_C\colon{C}\to{C}\tens{C}$ respectively. A unital $*$-homomorphism $\Theta\colon{A}\to{C}$ is a \emph{quantum semigroup morphism} if
\[
(\Theta\tens\Theta)\comp\Delta_A=\Delta_C\comp\Theta.
\]

\begin{theorem}\label{wang}
Let $\qs(A_1)$ and $\qs(A_2)$ be compact quantum semigroups and let $\Delta_k\colon{A_k}\to{A_k}\tens{A_k}$ ($k=1,2$) be the corresponding comultiplications. Let $C=A_1\star{A_2}$. Then there exists a unique $\Delta_C\colon{C}\to{C}\tens{C}$ making $\qs(C)$ a compact quantum semigroup such that the inclusion maps $\iota_{i}\colon{A_k}\hookrightarrow{C}$ ($k=1,2$) are quantum semigroup morphisms.

Moreover, for any quantum semigroup $\qs(D)$ and quantum group morphisms $\Theta_k\colon{A_k}\to{D}$ there exists a unique $\Theta_C\colon{C}\to{D}$ such that $\Theta_k=\Theta_C\comp\iota_k$ ($k=1,2$) and $\Theta_C$ is a quantum semigroup morphism.
\end{theorem}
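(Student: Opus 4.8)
The plan is to construct $\Delta_C$ directly from the universal property of the free product and then verify coassociativity, the morphism conditions, and both uniqueness claims by a diagram chase, using only one principle: since $C=A_1\star{A_2}$ is generated as a \cst-algebra by $\iota_1(A_1)\cup\iota_2(A_2)$, any two unital $*$-homomorphisms defined on $C$ that agree on $\iota_1(A_1)$ and on $\iota_2(A_2)$ coincide, while conversely any prescribed unital $*$-homomorphisms on the two factors extend (uniquely) to $C$. This existence-and-uniqueness statement (\cite[Section 0]{avitz}) is the only tool the argument needs.

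First I would feed the two unital $*$-homomorphisms $(\iota_k\tens\iota_k)\comp\Delta_k\colon{A_k}\to{C}\tens{C}$ ($k=1,2$) into the universal property of the free product. This produces a unique unital $*$-homomorphism $\Delta_C\colon{C}\to{C}\tens{C}$ with $\Delta_C\comp\iota_k=(\iota_k\tens\iota_k)\comp\Delta_k$, i.e.\ exactly the condition that $\iota_1,\iota_2$ be quantum semigroup morphisms; any comultiplication with that property must satisfy this same identity on the generators of $C$, so $\Delta_C$ is already the unique candidate. It remains to check coassociativity. Evaluating both $(\Delta_C\tens\id)\comp\Delta_C$ and $(\id\tens\Delta_C)\comp\Delta_C$ on $\iota_k(A_k)$ and using $\Delta_C\comp\iota_k=(\iota_k\tens\iota_k)\comp\Delta_k$ twice, the first restricts to $(\iota_k\tens\iota_k\tens\iota_k)\comp(\Delta_k\tens\id)\comp\Delta_k$ and the second to $(\iota_k\tens\iota_k\tens\iota_k)\comp(\id\tens\Delta_k)\comp\Delta_k$; these agree because $\Delta_k$ is coassociative. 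Since the two compositions agree on $\iota_1(A_1)\cup\iota_2(A_2)$, they agree on all of $C$.

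For the universal property of $\qs(C)$, given a quantum semigroup $\qs(D)$ and quantum semigroup morphisms $\Theta_k\colon{A_k}\to{D}$, the free product again yields a unique unital $*$-homomorphism $\Theta_C\colon{C}\to{D}$ with $\Theta_C\comp\iota_k=\Theta_k$, and uniqueness of $\Theta_C$ is immediate. To see that $\Theta_C$ is a quantum semigroup morphism, evaluate $(\Theta_C\tens\Theta_C)\comp\Delta_C$ and $\Delta_D\comp\Theta_C$ on $\iota_k(A_k)$: the left side becomes $(\Theta_k\tens\Theta_k)\comp\Delta_k$, the right side becomes $\Delta_D\comp\Theta_k$, and these coincide precisely because each $\Theta_k$ is a quantum semigroup morphism; agreement on the generators forces $(\Theta_C\tens\Theta_C)\comp\Delta_C=\Delta_D\comp\Theta_C$ on $C$.

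As for where the difficulty lies: there is essentially no hard step. The point at which S.~Wang's treatment of free products of compact quantum \emph{groups} invokes group structure is there to handle the density/counit conditions built into the definition of a compact quantum group, and those conditions are simply absent here, since a compact quantum semigroup is asked to carry only a coassociative comultiplication. The only mild care needed is the bookkeeping of tensor-leg identifications (as in $(\Delta_C\tens\id)\comp(\iota_k\tens\iota_k)=((\iota_k\tens\iota_k)\comp\Delta_k)\tens\iota_k$) and the repeated use of the fact that a unital $*$-homomorphism out of $A_1\star{A_2}$ is determined by its restrictions to the two factors.
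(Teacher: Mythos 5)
Your proposal is correct and follows essentially the same route as the paper: defining $\Delta_C$ on the free product via the universal property so that $\Delta_C\comp\iota_k=(\iota_k\tens\iota_k)\comp\Delta_k$, verifying coassociativity on the generating subalgebras $\iota_k(A_k)$, and deducing the second part by the same generator-agreement argument. You simply spell out the details the paper delegates to Wang's \cite[Theorem 3.4]{free}, and your remark that the group-specific step in Wang's proof is not needed here matches the paper's own comment.
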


\begin{proof}
The comultiplication $\Delta_C$ is defined uniquely by the diagram
\begin{equation}\label{DC}
\xymatrix@C=1em
{
&A_1\ar[ld]_{\Delta_2}\ar@{^{(}->}[rd]^{\iota_1}&&A_2\ar@{_{(}->}[ld]_{\iota_1}\ar[rd]^{\Delta_2}\\
A_1\tens{A_1}\ar[rrd]_{\iota_1\tens\iota_1}&&C\ar[d]^{\Delta_C}&&A_2\tens{A_2}\ar[lld]^{\iota_2\tens\iota_2}\\
&&C\tens{C}\\
}
\end{equation}
Clearly, the equality $(\Delta_C\tens\id)\comp\Delta_C(c)=(\id\tens\Delta_C)\comp\Delta_C(c)$ thus holds for $c$ in the image of either $\iota_1$ or $\iota_2$. These images generate $C$, so $\Delta_C$ is coassociative. The fact that $\iota_1$ and $\iota_2$ are quantum semigroup morphisms is built into the defining diagram \eqref{DC}.

The rest of the proof is the same as for free products of compact quantum groups (\cite[Theorem 3.4]{free}).
\end{proof}

Clearly Theorem \ref{wang} allows us to form any finite free products of quantum semigroups.

Now let us turn back to the discussion of the quantum family of all maps from a finite classical space (described by $B=\CC^n$) into a compact quantum semigroup (corresponding to a unital \cst-algebra $A$ and $\Delta_A\colon{A}\to{A}\tens{A}$). We know from Theorem \ref{easy1} that the quantum space of all maps $\qs(B)\to\qs(A)$ is $\qs(C)$, where $C\cong{A^{\star{n}}}$. By the results of Sadr discussed in Section \ref{intro} we know that there is a quantum semigroup structure on $\qs(C)$. We shall see that it coincides with the structure of the $n$-fold free product of the quantum semigroup $\qs(A)$ with itself. In the statement of the next proposition we already fix the isomorphism $C\cong{A^{\star{n}}}$, so that we may suppose that $C=A^{\star{n}}$ and the universal quantum family of maps $\Phi$ has the form \eqref{Phi2}.

\begin{proposition}
Let $\Gamma$ be the comultiplication on $C$ making the diagram \eqref{Gam} commutative and let $\Delta$ be the comultiplication on $C=A^{\star{n}}$ defined via theorem \ref{wang}. Then $\Gamma=\Delta$.
\end{proposition}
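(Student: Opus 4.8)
The plan is to exploit the universal property of $C=A^{\star n}$ from Theorem~\ref{easy1} together with the uniqueness of the map $\Gamma$ characterized by the diagram~\eqref{Gam}. Since $\Gamma$ is by construction the \emph{unique} morphism $C\to C\tens C$ making~\eqref{Gam} commute, it suffices to show that $\Delta$ (the free-product comultiplication from Theorem~\ref{wang}) also makes~\eqref{Gam} commute; then $\Gamma=\Delta$ follows immediately. So the whole proof reduces to a single diagram chase, carried out on generators.

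First I would record explicitly what $\Delta$ does: by the defining diagram~\eqref{DC} (in the $n$-fold version), $\Delta\comp\iota_j=(\iota_j\tens\iota_j)\comp\Delta_A$ for each $j=1,\dots,n$, and these relations determine $\Delta$ on all of $C$ since the images of the $\iota_j$ generate. Next I would plug $\Phi(a)=\sum_i e_i\tens\iota_i(a)$ into the top-left path of~\eqref{Gam}: applying $\id\tens\Delta$ to $\Phi(a)$ gives $\sum_i e_i\tens\Delta(\iota_i(a))=\sum_i e_i\tens(\iota_i\tens\iota_i)(\Delta_A(a))$. For the other path, I would start from $\Delta_A(a)$, apply $\Phi\tens\Phi$, then the flip $\id\tens\chi\tens\id$, then $\mu\tens\id\tens\id$; writing $\Delta_A(a)=\sum_\alpha a'_\alpha\tens a''_\alpha$ (Sweedler-type notation, used only informally), one gets $(\Phi\tens\Phi)(\Delta_A(a))=\sum_{\alpha}\sum_{i,j} e_i\tens\iota_i(a'_\alpha)\tens e_j\tens\iota_j(a''_\alpha)$; the flip and the multiplication $\mu$ on $B=\CC^n$ (which satisfies $\mu(e_i\tens e_j)=\delta_{ij}e_i$) collapse the double sum to $\sum_{\alpha}\sum_i e_i\tens\iota_i(a'_\alpha)\tens\iota_i(a''_\alpha)=\sum_i e_i\tens(\iota_i\tens\iota_i)(\Delta_A(a))$. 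The two paths agree, so~\eqref{Gam} commutes with $\Delta$ in place of $\Gamma$.

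The one point requiring a little care — and the closest thing to an obstacle — is the passage from ``the two composites agree on elements $\Phi(a)$'' to ``the diagram~\eqref{Gam} genuinely determines a unique map and $\Delta$ satisfies it''. Here one should note that~\eqref{Gam} is, per Sadr's result quoted in Section~\ref{intro}, precisely an instance of the universal property~\eqref{first} applied to the morphism $\Psi=(\mu\tens\id\tens\id)\comp(\id\tens\chi\tens\id)\comp(\Phi\tens\Phi)\comp\Delta_A\colon A\to B\tens(C\tens C)$; the unique $\Lambda\colon C\to C\tens C$ it produces is by definition $\Gamma$, and the computation above shows $(\id\tens\Delta)\comp\Phi=\Psi$, i.e.\ $\Delta$ is also such a $\Lambda$. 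Uniqueness in~\eqref{first} then gives $\Gamma=\Delta$. (The use of informal Sweedler notation is harmless, since everything can equivalently be checked without it by tracking the action of each arrow on $\Phi(a)$ directly, using only $\mu(e_i\tens e_j)=\delta_{ij}e_i$ and the definitions of $\Phi$ and $\Delta$.)
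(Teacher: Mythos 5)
Your proof is correct and takes essentially the same route as the paper: the identical computation showing that the lower path of \eqref{Gam} sends $a$ to $\sum_{k}e_k\tens\bigl[(\iota_k\tens\iota_k)\Delta_A(a)\bigr]$ is the heart of both arguments. The only cosmetic difference is that the paper reads off $\Gamma\comp\iota_k=(\iota_k\tens\iota_k)\comp\Delta_A$ from the diagram and matches this with $\Delta$ on the generating subalgebras, whereas you verify that $\Delta$ makes \eqref{Gam} commute and invoke the uniqueness of $\Gamma$ coming from the universal property \eqref{first}; the two conclusions are equivalent.
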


\begin{proof}
We adopt the notation introduced in Section \ref{intro}. For a simple tensor $a\tens{b}\in{A}\tens{A}$ we have
\[
(\id\tens\chi\tens\id)(\Phi\tens\Phi)(a\tens{b})=
\sum_{k,l=1}^ne_k\tens{e_l}\tens\iota_k(a)\tens\iota_l(b)=
\sum_{k,l=1}^ne_k\tens{e_l}\tens\bigl[(\iota_k\tens\iota_l)(a\tens{b})\bigr],
\]
so that for any $X\in{A}\tens{A}$ we have
\[
(\id\tens\chi\tens\id)(\Phi\tens\Phi)(X)=
\sum_{k,l=1}^ne_k\tens{e_l}\tens\bigl[(\iota_k\tens\iota_l)(X)\bigr],
\]
Thus for any $a\in{A}$
\[
(\id\tens\chi\tens\id)(\Phi\tens\Phi)\bigl(\Delta_A(a)\bigr)=
\sum_{k,l=1}^ne_k\tens{e_l}\tens\bigl[(\iota_k\tens\iota_l)\Delta_A(a)\bigr]
\]
and it follows that
\[
(\mu\tens\id\tens\id)(\id\tens\chi\tens\id)(\Phi\tens\Phi)\bigl(\Delta_A(a)\bigr)
=\sum_{k=1}^ne_k\tens\bigl[(\iota_k\tens\iota_k)\Delta_A(a)\bigr].
\]
In view of the commutativity of \eqref{Gam}, this means that
\[
(\id\tens\Gamma)\Phi(a)=\sum_{k=1}^ne_k\tens\bigl[(\iota_k\tens\iota_k)\Delta_A(a)\bigr].
\]
Recalling the formula \eqref{Phi2} we find that for $k=1,\dotsc,n$ we have $\Gamma\bigl(\iota_k(a)\bigr)=(\iota_k\tens\iota_k)\Delta_A(a)$ and this is precisely $\Delta\bigl(\iota_k(a)\bigr)$. Since the images of $\iota_1,\dotsc,\iota_n$ generate $C$, we have $\Gamma(c)=\Delta(c)$ for all $c\in{C}$.
\end{proof}

\section{Corollaries and remarks}\label{concl}

As in previous sections let $B=\CC^n$ be a commutative and finite-dimensional \cst-algebra and let $\qs(A)$ be a quantum space endowed with a structure of a quantum semigroup. Then the quantum space of all maps $\qs(B)\to\qs(A)$ corresponds to the \cst-algebra $C=A^{\star{n}}$. Moreover the natural quantum semigroup structure on $\qs(C)$ introduced by M.M.~Sadr coincides with the one of the $n$-fold free product of the quantum semigroup $\qs(A)$ with itself. We have

\begin{itemize}
\item if $\qs(A)$ is a compact quantum group then so is $\qs(C)$. Indeed, this is a special case of \cite[Theorem 3.4]{free}.
\item If $A$ admits a continuous counit then so does $C$. In fact the counit $\varepsilon\colon{A}\to\CC$ is a quantum semigroup morphism, so we can use the second part of Theorem \ref{wang}.
\item If $C$ has a continuous counit then $A$ has one by restriction.
\item If $\qs(C)$ is a compact quantum \emph{group} then so is $\qs(A)$. This is a little less immediate than the previous questions. Consider the map $\pi:C\to{A}$ defined uniquely by the fact that
\[
\iota_k\comp\pi=\id
\]
for $k=1,\dotsc,n$ (this is the map identifying all of the canonical copies of $A$ inside $C$). Then it is clear that $\pi$ is a surjective quantum semigroup morphism. Indeed, we have $(\pi\tens\pi)(\iota_k\tens\iota_k)=\id_{A\tens{A}}$, so that for $c\in{C}$ of the form $c=\iota_k(a)$ we have
\[
(\pi\tens\pi)\Delta(c)=(\pi\tens\pi)\Delta\bigl(\iota_k(a)\bigr)
(\pi\tens\pi)(\iota_k\tens\iota_k)\Delta_A(a)=\Delta_A(a)=\Delta_A\bigl(\pi(c)\bigr).
\]
The result follows for any $c\in{C}$ since $C$ is generated by images of $\iota_1,\dotsc,\iota_n$. Now the density conditions needed for $\qs(A)$ to be a quantum group are consequences of the ones for $\qs(C)$ (cf.~\cite[Definition 2.1]{cqg}).
\end{itemize}
The above statements constitute answers to all the questions left open in \cite[Section 3]{mms}

\subsection{Restricting quantum group structure}

It is tempting to prove the fact established in the last of the four statements above via the following argument. Consider a quantum space $\qs(C)$ endowed with a structure of a compact quantum group. In other words let $C$ be a unital \cst-algebra with a comultiplication $\Delta\colon{C}\to{C}\tens{C}$ such that the density conditions from \cite[Definition 2.1]{cqg} are satisfied. Now let $A$ be a unital \cst-subalgebra of $C$ (with the same unit) such that $\Delta(A)\subset{A}\tens{A}$. One could ask if $\bigl.\Delta\bigr|_A$ provides $\qs(A)$ with a structure of a compact quantum group. Using \cite[Theorem 4.2]{bmt} we immediately find that if $\qs(C)$ is \emph{coamenable} then $\qs(A)$ is indeed a compact quantum group. Moreover, one could drop the existence of a bounded counit on $C$ and prove the same using only faithfulness of the Haar measure of $\qs(C)$ (then one should use the Kustermans-Vaes non-commutative Weil theorem, cf.~\cite{vnqg}). However we have the following theorem:

\begin{theorem}
Let $\qs(C)$ be a compact quantum group with comultiplication $\Delta\colon{C}\to{C}\tens{C}$ whose Haar measure is not faithful and assume that $C$ is exact. Then there exists a unital \cst-subalgebra $A$ such that $\Delta(A)\subset{A}\tens{A}$, but $\Delta_A=\bigl.\Delta\bigr|_A$ does not make $\qs(A)$ a compact quantum group.
\end{theorem}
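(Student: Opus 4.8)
The plan is to take for $A$ the subalgebra $\CC\I+J$, where $J\subset C$ is the kernel of the canonical surjection $\lambda\colon C\to C_r$ onto the reduced version of $\qs(C)$; equivalently $J$ is the null ideal of the Haar state, $h=h_r\comp\lambda$ with $h_r$ the (faithful) Haar state of $C_r$. Non-faithfulness of $h$ is exactly the condition $J\neq\{0\}$, and since $J$ is a proper closed two-sided ideal, $A$ is a unital \cst-subalgebra with $A\neq\CC\I$. Two things are immediate. First, $h$ restricts to a \emph{character} $\zeta$ on $A$, namely $\alpha\I+j\mapsto\alpha$: indeed $h(j)=0$ and $h(jj')=0$ for $j,j'\in J$ by Cauchy--Schwarz, using $h(j'^*j')=h_r\bigl(\lambda(j')^*\lambda(j')\bigr)=0$. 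Second, \emph{if} $\Delta(A)\subset A\tens A$, then $(A,\Delta|_A)$ is a compact quantum semigroup and $\zeta$ is \emph{absorbing} for it, i.e.\ $(\zeta\tens\id)\comp\Delta|_A=\zeta(\cdot)\I=(\id\tens\zeta)\comp\Delta|_A$, inherited from the invariance of $h$ on $C$. So the theorem reduces to two assertions: that a compact quantum semigroup with $A\neq\CC\I$ and an absorbing character cannot be a compact quantum group, and that $\CC\I+J$ really is $\Delta$-invariant.

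The first of these I would isolate as a lemma. If $(A,\Delta_A)$ were a compact quantum group, let $\mathcal{O}\subset A$ be its dense Hopf $*$-algebra with counit $\varepsilon$ and antipode $S$, and let $m$ denote multiplication. The restriction $\zeta|_{\mathcal{O}}$ is an algebra character, hence invertible for the convolution product with inverse $\zeta\comp S$; applying $\zeta\comp S$ to the absorption identity $\sum\zeta(a_{(1)})a_{(2)}=\zeta(a)\I$ (valid on $\mathcal{O}$) and using $m\comp(\id\tens S)\comp\Delta_A=\varepsilon(\cdot)\I$ gives $\varepsilon=\zeta$ on $\mathcal{O}$. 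Thus the counit is bounded, and being equal to $\zeta$ on a dense subalgebra it equals $\zeta$ on all of $A$; but then $\id_A=(\varepsilon\tens\id)\comp\Delta_A=(\zeta\tens\id)\comp\Delta_A=\zeta(\cdot)\I$, forcing $A=\CC\I$ --- a contradiction.

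It remains --- and this is the crux --- to prove $\Delta(A)\subset A\tens A$, equivalently $\Delta(J)\subset J\tens J$. First I would check that for every state $\omega$ on $C$ the translation maps $(\omega\tens\id)\comp\Delta$ and $(\id\tens\omega)\comp\Delta$ send $J$ into $J$: for $j\in J$ put $a=(\omega\tens\id)\Delta(j)$; the Kadison--Schwarz inequality for the unital completely positive map $\omega\tens\id$ gives $a^*a\le(\omega\tens\id)\Delta(j^*j)$, so, applying $h$ and using its right-invariance, $h(a^*a)\le(\omega\tens h)\Delta(j^*j)=\omega\bigl(h(j^*j)\I\bigr)=0$, and faithfulness of $h_r$ upgrades $h(a^*a)=0$ to $\lambda(a)=0$, i.e.\ $a\in J$; the map $(\id\tens\omega)\comp\Delta$ is handled symmetrically via left-invariance, and one passes to arbitrary functionals by writing them as combinations of states. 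Consequently $(\omega\tens\id_{C_r})$ annihilates $(\id_C\tens\lambda)\Delta(j)\in C\tens C_r$ for every $\omega$, whence $(\id_C\tens\lambda)\Delta(j)=0$; likewise $(\lambda\tens\id_C)\Delta(j)=0$. Now exactness of $C$ enters: it identifies $\ker(\id_C\tens\lambda)=C\tens J$ and $\ker(\lambda\tens\id_C)=J\tens C$, so $\Delta(j)\in(C\tens J)\cap(J\tens C)=J\tens J$ (the last equality by multiplying with an approximate unit of $J$ in each leg). Hence $\Delta(A)=\CC(\I\tens\I)+\Delta(J)\subset A\tens A$. I expect this $\Delta$-invariance to be the main obstacle: the naive bound $\Delta(A)\subset(\lambda\tens\lambda)^{-1}(\CC\I\tens\I)=\CC(\I\tens\I)+\ker(\lambda\tens\lambda)$ is useless, since $\ker(\lambda\tens\lambda)$ is vastly larger than $J\tens J$, and obtaining the sharp inclusion $\Delta(J)\subset J\tens J$ is precisely where the invariance and faithfulness of the Haar measure and the exactness of $C$ are needed.
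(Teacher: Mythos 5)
Your construction is exactly the paper's: $A=\CC\I+J$ with $J=\ker\lambda$, and the heart of the matter is the sharp inclusion $\Delta(J)\subset J\tens J$, obtained in both cases from $\Delta(J)\subset\ker(\lambda\tens\id)=J\tens C$ and $\Delta(J)\subset\ker(\id\tens\lambda)=C\tens J$ with exactness of $C$ identifying the kernels of the amplified maps. The differences are in how the two sub-steps are handled. For the one-sided inclusions the paper simply cites earlier results (the proof of \cite[Theorem 2.1]{bmt}, \cite[Proposition 4.1]{dz}), while you reprove them from scratch via Kadison--Schwarz for the slice maps, invariance of the Haar state and faithfulness of $h_r$ --- a correct, self-contained substitute for the citation; and where you pass from $(C\tens J)\cap(J\tens C)$ to $J\tens J$ with approximate units, the paper uses the slicker $\Delta(J)=\Delta(J^2)\subset(J\tens C)(C\tens J)=J\tens J$. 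The genuinely different part is the final contradiction: the paper restricts the Haar state of $C$ to $A$, invokes uniqueness of the Haar measure of the putative quantum group $\qs(A)$, observes that its reduced version is then one-dimensional, hence commutative, hence the reducing morphism is an isomorphism, forcing $J=\{0\}$; you instead note that $h|_A$ is an absorbing \emph{character} and run a Hopf-$*$-algebra computation (convolution-inverting the character by composing with the antipode) to conclude that the counit is bounded and equals this character, whence $\id_A=(\varepsilon\tens\id)\comp\Delta_A$ collapses $A$ to $\CC\I$. Your variant is more elementary in that it avoids uniqueness of the Haar measure and the coamenability of commutative compact quantum groups, at the price of going through the dense Hopf $*$-algebra; both arguments are correct, and the proposal as a whole is sound.
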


\begin{proof}
Let $\lambda\colon{C}\to{C_r}$ be the reducing morphism (\cite[Page 656]{pseudogr}, \cite[Section 2]{bmt}) and let $J=\ker\lambda$. Then by \cite[Proposition 4.1]{dz} (also by the proof of \cite[Theorem 2.1]{bmt}) we have $\Delta(J)\subset{J}\tens{C}$ (by exactness of $C$ we have $\ker(\lambda\tens\id)=(\ker{\lambda})\tens{C}$). In the same way we can show that $\Delta(J)\subset{C}\tens{J}$. Therefore
\[
\Delta(J)=\Delta(J^2)\subset(J\tens{C})(C\tens{J})=J\tens{J}.
\]
Let us put $A=J+\CC\I$. Then clearly $\Delta(A)\subset{A}\tens{A}$. Let us prove that $\bigl.\Delta\bigr|_A$ does not make $\qs(A)$ into a compact quantum group. Assume to the contrary that $\qs(A)$ is a compact quantum group. Then it's Haar measure is unique. But the restriction of the Haar measure of $C$ is clearly a two-sided invariant state on $A$. Now this means that the reduced version of $\qs(A)$ is one dimensional. In particular it is commutative, so the reducing morphism must be an isomorphism which is only possible when $J=\{0\}$. This contradicts the fact that the Haar measure of $\qs(C)$ is not faithful.
\end{proof}

In order to produce an example of a quantum space $\qs(C)$ with quantum group structure, non-faithful Haar measure and $C$ exact one can take the reduced $\GG_r=(A_r,\Delta_r)$ version of any non-coamenable quantum group $\GG$ such that $A_r$ is an exact \cst-algebra  (this is the case for all cocommutative examples arising from exact discrete groups). Let $\widetilde{\GG_r}$ be the compact quantum group obtained by adjoining the neutral element to $\GG_r$ (\cite[Section 8]{T}). By \cite[Proposition 8.3]{T} the corresponding \cst-algebra is the direct sum of $A_r$ and $\CC$, hence it is exact (\cite[Section 2.5.6]{Wass}). The Haar measure is not faithful by definition.

\subsection{Example}\label{example} Let us consider the special example appearing at the very beginning of the theory of quantum families of maps and in later work (\cite{pseu,qs,mms}). Let us take $B=\CC^2$ and also $A=\CC^2=\C(\ZZ_2)$. The \cst-algebra $C$ corresponding to the quantum space of all maps from a two point set to the group $\ZZ_2$ is then isomorphic to $\cst(\ZZ_2*\ZZ_2)$. Clearly the free product construction gives in this case the standard cocommutative comultiplication on the group \cst-algebra (cf.~\cite[Example 3.9(2)]{free}). In more concrete terms we can describe the \cst-algebra $C$ as the algebra of continuous functions $[0,1]\to{M_2(\CC)}$ whose values at the end-points are diagonal. In this picture $A$ is generated\footnote{Note that $A$ is nothing else than the universal unital \cst-algebra generated by two projections with no relations.} by $p$ and $q$, where for $t\in[0,1]$
\[
p(t)=
\begin{bmatrix}
0&0\\0&1
\end{bmatrix},\qquad
q(t)=\tfrac{1}{2}\begin{bmatrix}
1-\cos{2\pi{t}}&\mathrm{i}\sin{2\pi{t}}\\
-\mathrm{i}\sin{2\pi{t}}&1+\cos{2\pi{t}}
\end{bmatrix}.
\]
The comultiplication $\Delta$ is then given by
\[
\begin{split}
\Delta(p)&=(p-\I)\tens{p}+\I\tens\I+p\tens(p-\I),\\
\Delta(q)&=(q-\I)\tens{q}+\I\tens\I+q\tens(q-\I).
\end{split}
\]
The quantum space $\qs(C)$ happens to be the quantum space $\QMap\bigl(\qs(\CC^2)\bigr)$ of all maps from a two point set to itself, so it carries a natural quantum semigroup structure (cf.~Section \ref{intro}), but this is never a quantum group structure (\cite[Proposition 2.1]{ajse}). This other structure is given by the unital $*$-homomorphism $C\to{C}\tens{C}$
\[
p\longmapsto{p\tens{p}+(\I-p)\tens{q}},\qquad{q}\longmapsto{q\tens{p}+(\I-q)\tens{q}}.
\]

\section{Maps from non-classical sets}

Let $B$ be a commutative finite-dimensional algebra and let $A$ be a unital finitely generated \cst-algebra such that $\qs(A)$ is a compact quantum group. In Section \ref{fp} we reproved M.M.~Sadr's result that the quantum space of all maps $\qs(C)$ from $\qs(B)$ to $\qs(A)$ is a quantum semigroup. In the original formulation from \cite{mms} the comultiplication $\Delta\colon{C}\to{C}\tens{C}$ was introduced via the universal property of $C$ (cf.~the diagram \eqref{Gam}). Now this universal property could only be used due to the fact that the multiplication map $\mu\colon{B}\tens{B}\to{B}$ is a homomorphism. This is no longer the case if we consider $B$ non-commutative. Nevertheless, one could wonder if there is some other way of defining the quantum semigroup structure on $C$ even when $B$ is non-commutative.

In this section we will show that there are severe problems with generalizing the phenomena discussed so far to the case when $\qs(B)$ is no longer a classical finite space. More precisely we have

\begin{theorem}\label{noqg}
Let $B=M_2(\CC)$ and $A=\CC^2=\cst(\ZZ_2)$ with its standard cocommutative comultiplication. Let $\qs(C)$ be the quantum space of all maps $\qs(B)\to\qs(A)$. Then $C$ does not admit a compact quantum group structure.
\end{theorem}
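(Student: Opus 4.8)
The plan is not to describe $C$ explicitly, but to extract from its universal property only the structure of its one-dimensional representations, and then to play this off against the structure theory of compact quantum groups. \emph{Step 1: the characters of $C$.} Let $p$ be a minimal projection of $A=\CC^2$; a unital $*$-homomorphism out of $A$ is the same thing as a choice of an arbitrary projection $Q$ to serve as the image of $p$. Applying the universal property of $(C,\Phi)$ with $D=\CC$ identifies the characters of $C$ with the unital $*$-homomorphisms $A\to M_2(\CC)\tens\CC=M_2(\CC)$, hence with the projections of $M_2(\CC)$; explicitly the character $\chi$ corresponds to the projection $(\id\tens\chi)\bigl(\Phi(p)\bigr)$. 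This correspondence is a continuous bijection from the weak-$*$ character space of $C$ (compact, since $C$ is unital) to the set of projections of $M_2(\CC)$ with its norm topology (Hausdorff, indeed finite-dimensional), hence a homeomorphism. On the latter set the trace is a continuous $\{0,1,2\}$-valued function, so the set decomposes as the disjoint union of three clopen pieces: $\{0\}$, $\{\I\}$ and the set of rank-one projections, the last of which is homeomorphic to $\CC P^1$, i.e.\ to a two-sphere. These are exactly its connected components, so the spectrum of the abelianization $C_{\mathrm{ab}}$ of $C$ is homeomorphic to $S^2\sqcup\{\mathrm{pt}\}\sqcup\{\mathrm{pt}\}$.

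\emph{Step 2: reduction to a classical group.} Suppose, for contradiction, that some unital $*$-homomorphism $\Delta\colon C\to C\tens C$ makes $\qs(C)$ a compact quantum group, and let $q\colon C\to C_{\mathrm{ab}}$ be the quotient map onto the abelianization. Since $C_{\mathrm{ab}}\tens C_{\mathrm{ab}}$ is commutative, $(q\tens q)\comp\Delta$ kills the commutator ideal and therefore factors as $\bar\Delta\comp q$ for a unital, coassociative $*$-homomorphism $\bar\Delta\colon C_{\mathrm{ab}}\to C_{\mathrm{ab}}\tens C_{\mathrm{ab}}$; applying $q\tens q$ to the two density conditions satisfied by $\Delta$ shows that $\bar\Delta$ satisfies them as well. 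Thus $\qs(C_{\mathrm{ab}})$ is a \emph{commutative} compact quantum group, so by Woronowicz's description of these objects $C_{\mathrm{ab}}\cong\C(G)$ for a compact Hausdorff topological group $G$. Its underlying space, being the spectrum of $C_{\mathrm{ab}}$, is then homeomorphic to $S^2\sqcup\{\mathrm{pt}\}\sqcup\{\mathrm{pt}\}$.

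\emph{Step 3: the topological contradiction.} In any topological group each connected component is a translate of the identity component and hence homeomorphic to it, so all connected components of $G$ are mutually homeomorphic. But the connected components of $S^2\sqcup\{\mathrm{pt}\}\sqcup\{\mathrm{pt}\}$ are a two-sphere and two single points, which are plainly not homeomorphic to one another. This contradiction shows that $C$ carries no compact quantum group structure.

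The main obstacle in turning this sketch into a complete argument lies in the two ``soft'' ingredients of Step 2: checking that the density (cancellation) conditions really do descend through $q\tens q$, and invoking Woronowicz's result that a commutative unital \cst-algebra equipped with a coassociative comultiplication satisfying these conditions is $\C(G)$ for a compact group $G$ --- this is the only point at which the density axioms are genuinely used, since a commutative bialgebra need not come from a group and it is precisely the cancellation built into the quantum group axioms that forces it. Everything else is elementary; note that the hypothesis $B=M_2(\CC)$, $A=\CC^2$ enters only in Step 1, where it makes the classical points of $\qs(C)$ into the space of all projections of a $2\times2$ matrix algebra --- a space that is patently inhomogeneous and therefore cannot be the underlying space of a group.
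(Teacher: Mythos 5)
Your proof is correct and follows the same strategy as the paper: identify the set of characters of $C$ (with the weak$^*$ topology) as $S^2\sqcup\{\mathrm{pt}\}\sqcup\{\mathrm{pt}\}$ and observe that this space cannot underlie a compact topological group. The differences are only in execution, and each is a small improvement or an expansion of something the paper cites or relegates to a footnote: you read off the character space directly from the universal property of $(C,\Phi)$ as the set of projections in $M_2(\CC)$, where the paper computes it from an explicit presentation of $C$ by generators and relations; you prove via the abelianization that the character space of a compact quantum group is a compact group, where the paper cites this fact; and your homogeneity argument (all connected components of a topological group are mutually homeomorphic) is precisely the alternative argument the paper gives in a footnote, in place of the case analysis on the location of the neutral element.
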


Before proving Theorem \ref{noqg} let us point out, as we remarked in Section \ref{concl}, that if $\qs(A)$ has a quantum \emph{group} structure then so does $\qs(C)$. Therefore the example of Theorem \ref{noqg} means that in case of maps from a non-classical set to a compact quantum group the situation is certainly more complicated. However, it does not rule out the existence of a quantum semigroup structure on $\qs(C)$. Indeed, any quantum space is easily seen to have a quantum semigroup structure.

\begin{proof}[Proof of Theorem \ref{noqg}]
Let $\qs(C)$ be the quantum space of all maps from $\qs(B)$ to $\qs(A)$ and let $\Phi\colon{A}\to{B}\tens{C}$ be the quantum family of all these maps. Then it is not difficult to see that $C$ is isomorphic to the universal unital \cst-algebra generated by three elements $p,q$ and $z$ with the relations
\begin{align*}
&&&&p&=p^*,&p&=p^2+z^*z,&zp&=(\I-q)z,&&&&\\
&&&&q&=q^*,&q&=q^2+zz^*.
\end{align*}
The $*$-homomorphism $\Phi$ is then defined as
\[
\Phi\bigl(
\bigl[\begin{smallmatrix}
1\\0
\end{smallmatrix}
\bigr]
\bigr)=
\begin{bmatrix}
p&z^*\\z&q
\end{bmatrix}\in{M_2(\CC)}\tens{C}.
\]

Assume now that $\qs(C)$ has a quantum group structure. Then the set $X$ of characters of the \cst-algebra $C$ equipped with weak$^*$-topology carries a structure of a compact group (cf.~e.g.~\cite[Section 4.1]{ba}). We will show that $X$ is homeomorphic to the space which is the disjoint sum of the two dimensional sphere and two points. This compact space does not admit a structure of a topological group. Indeed, the neutral element cannot belong to the sphere because then the sphere would be the connected component of the identity and this is impossible by \cite[Section 3.C]{hatch}. On the other hand, if the neutral element were one of the isolated points, the group would have to be discrete, hence finite.\footnote{Another argument showing that $X$ cannot be a group is that its topology is clearly non-uniform. There is no homeomorphism of $X$ onto itself mapping any of the isolated points onto a point on the sphere.}

It is easy to see that any character of $C$ belongs to one of the following four families:
\[
\bigl\{\chi^+_\zeta\bigr\}_{|\zeta|<\frac{1}{2}},\qquad
\bigl\{\chi^-_\zeta\bigr\}_{|\zeta|<\frac{1}{2}},\qquad
\bigl\{\chi^0_\zeta\bigr\}_{|\zeta|=\frac{1}{2}},\qquad\bigl\{\omega_0,\omega_1\bigr\},
\]
where $\omega_k(p)=\omega_k(q)=k$, $\omega_k(z)=0$ for $k=0,1$ while
\begin{align*}
&&&&\chi^+_\zeta(p)&=\tfrac{1}{2}+\sqrt{\tfrac{1}{4}-|\zeta|^2},&
\chi^+_\zeta(q)&=\tfrac{1}{2}-\sqrt{\tfrac{1}{4}-|\zeta|^2},&
\chi^+_\zeta(z)&=\zeta,&&&&\\
&&&&\chi^-_\zeta(p)&=\tfrac{1}{2}-\sqrt{\tfrac{1}{4}-|\zeta|^2},&
\chi^-_\zeta(q)&=\tfrac{1}{2}+\sqrt{\tfrac{1}{4}-|\zeta|^2},&
\chi^-_\zeta(z)&=\zeta
\end{align*}
and
\[
\chi^0_\zeta(p)=\chi^0_\zeta(q)=\tfrac{1}{2},\qquad\chi^0_\zeta(z)=\zeta.
\]
It is also not difficult to see that with weak$^*$-topology the three families $\{\chi^+_\zeta\}_{|\zeta|<\frac{1}{2}}$, $\{\chi^-_\zeta\}_{|\zeta|<\frac{1}{2}}$ and $\{\chi^0_\zeta\}_{|\zeta|=\frac{1}{2}}$ form the upper hemisphere, lower hemisphere and equator of a two-sphere (since we are dealing with functionals of norm one, the weak$^*$-convergence in $C^*$ is equivalent to pointwise convergence on the unital $*$-algebra generated by $p,q$ and $z$). The functionals $\omega_0$ and $\omega_1$ are separated from this two-sphere.
\end{proof}

Let us go back to the situation when $B$ is commutative. In Section \ref{concl} we showed that if $\qs(C)$ is a compact quantum group then so is $\qs(A)$. In the derivation of this answer we used the fact that $\qs(A)$ is a quantum sub-semigroup of $\qs(C)$. The mapping $\pi$ used in that reasoning is the unique map for which the diagram
\[
\xymatrix{
A\ar[rr]^-{\Phi}\ar@{=}[d]&&B\tens{C}\ar[d]^{\id\tens\pi}\\
A\ar[rr]_-{a\longmapsto\I\tens{a}}&&B\tens{A}}
\]
is commutative. Clearly we still have this map when $B$ is not commutative, so the fact that $\qs(A)$ is a quantum sub-semigroup of $\qs(C)$ yields some restrictions on the comultiplication on $C$.

\end{document}